\newtheorem{theorem}{Theorem}[section]
\newtheorem{question}[theorem]{Question}
\newtheorem{proposition}[theorem]{Proposition}
\theoremstyle{definition}
\newtheorem*{example*}{Example}
\newcolumntype{C}{>{$}c<{$}}
\newcolumntype{M}{>{$}p{10cm}<{$}}
\begin{document}

\author{Danny Dyer\\
	Department of Mathematics and Statistics\\
	St. John's Campus, Memorial University of Newfoundland\\
	St. John's, Newfoundland\\
	Canada\\
	{\tt dyer@mun.ca}
	\and
	Jared Howell\\
	School of Science and the Environment\\
	Grenfell Campus, Memorial University of Newfoundland\\
	Corner Brook, Newfoundland\\
	Canada\\
	{\tt jahowell@grenfell.mun.ca}
	\and
	Brittany Pittman\\
	Department of Mathematics and Statistics\\
	St. John's Campus, Memorial University of Newfoundland\\
	St. John's, Newfoundland\\
	Canada\\
	{\tt bep275@mun.ca}}

\title{A note on watchman's walks in de Bruijn graphs}
\date{ }

\maketitle

\begin{abstract}
The watchman's walk problem in a digraph calls for finding a minimum length closed dominating walk, where direction of arcs is respected. The watchman's walk of a de Bruijn graph of order $k$ is described by a de Bruijn sequence of order $k-1$. This idea is extended to certain subdigraphs of de Bruijn graphs.
\end{abstract}

\section{Introduction}

In 1946, de Bruijn studied the problem of finding a shortest possible binary sequence that contains every binary string of length $k$. In \cite{debruijn1}, he solved this problem and introduced both de Bruijn sequences and de Bruijn graphs. A \textit{de Bruijn sequence of order $k$} is a binary sequence of length $2^k$ such that the last bit is said to be adjacent to the first bit and every binary $k$-tuple occurs exactly once in the sequence. For example, a de Bruijn sequence of order $2$ is 1001. Every binary $2$-tuple ($00$, $01$, $10$, and $11$) occurs in this sequence.

In a de Bruijn sequence $S$ of order $k$, the \textit{k-tour} is the sequence of substrings of length $k$, in order of their occurrence in $S$, starting with the initial $k$-string in $S$. Since $S$ is considered to be cyclic, there are $2^k$ substrings in the tour.

A de Bruijn graph of order $k$ is a directed graph, denoted $G(k)$, whose $2^k$ vertices are labelled by each possible binary string of length $k$. To define the edges one must consider two possible left shift operations. A left shift operation occurs when a $k$-bit substring $b_1 b_2 \ldots b_k$ is obtained from another substring $a_1 a_2 \ldots a_k$ such that $b_i=a_{i+1}$ for $1\leq i<k$. A \textit{cycle shift} occurs when $a_1 a_2 \ldots a_k \rightarrow b_1 b_2 \ldots b_k$, and $b_k=a_1$. For example, $100 \rightarrow 001$. A \textit{de Bruijn shift} occurs when $a_1 a_2 \ldots a_k \rightarrow b_1 b_2 \ldots b_k$, and $b_k\not=a_1$. For example, $100 \rightarrow 000$. In the de Bruijn graph of order $k$ there is an arc from the vertex labelled by string $a$ to the vertex labelled by string $b$ if and only if $b$ can be obtained from $a$ using one of the left shift operations. The de Bruijn graphs of order 2 and 3 are shown in Figures \ref{debruijngraph2} and \ref{debruijngraph3}.

\begin{figure}[H]
	\centering
	\begin{subfigure}[b]{0.4\textwidth}
		\centering
		\includegraphics[scale=1]{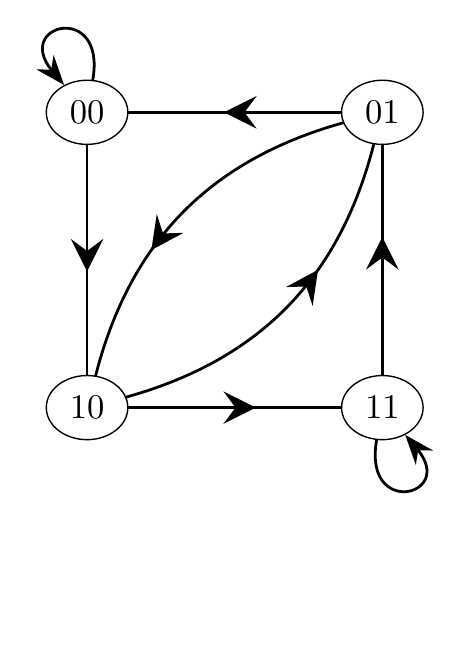}
		\caption{A de Bruijn graph of order $2$ }
		\label{debruijngraph2}
	\end{subfigure}\hspace{6ex}
	\begin{subfigure}[b]{0.4\textwidth}
		\centering
		\includegraphics[scale=0.9]{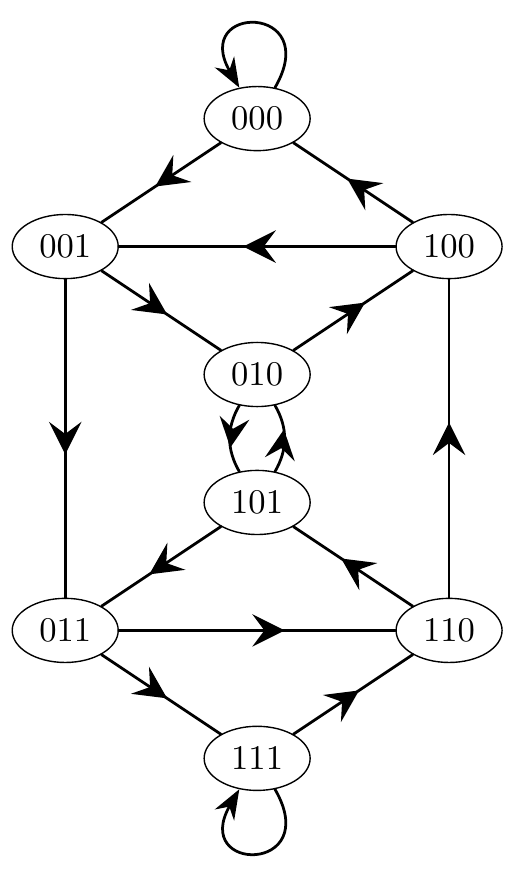}
		\caption{A de Bruijn graph of order $3$}
		\label{debruijngraph3}
	\end{subfigure}
\end{figure}

While the original de Bruijn sequences have only two symbols in their alphabet, the idea of a de Bruijn sequence, and hence de Bruijn graph, can be generalized to an alphabet of size $a\in \mathbb{N}$.  We denote the corresponding de Bruijn graph by $G(a,k)$. For example, the digraph in Figure~\ref{db2} is the de Bruijn graph of order $2$ on the alphabet $\{0,1,2\}$. A corresponding de Bruijn sequence is 220011210.
For further results on de Bruijn sequences and graphs on larger alphabets, see \cite{deb2, genome1, register, multidb}.

\begin{figure}[H]

		\centering
		\includegraphics[scale=0.9]{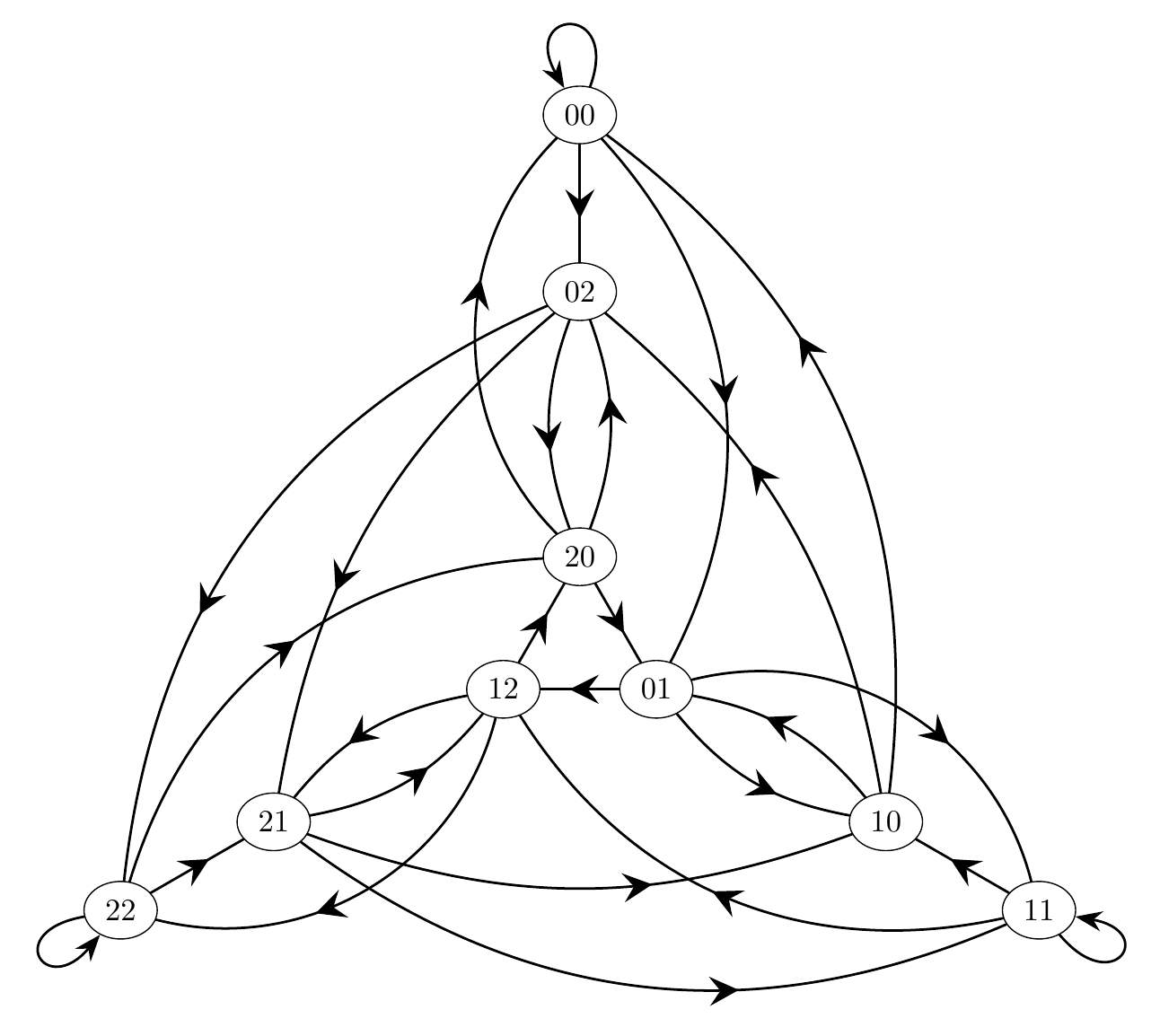}
		\caption{The de Bruijn graph $G(3,2)$ on $\{0,1,2\}$}
		\label{db2}

\end{figure}

Given a graph $G$ and a single watchman, the watchman's walk problem looks at the scenario of that watchman traversing the graph in such a way that the route is a minimum closed dominating walk. Any such walk in $G$ is called a \textit{watchman's walk} of $G$. The length of a watchman's walk in $G$ is denoted by $w(G)$; this is called the \textit{watchman number} of $G$. The typical goals of the watchman's walk problem are to find a watchman's walk of $G$ and determine $w(G)$. The watchman's walk problem was introduced by Hartnell, Rall, and Whitehead in 1998 in \cite{intro}. In a directed graph, we say that the watchman can only move to and see the vertices that are adjacent to him relative to outgoing arcs. That is, a watchman's walk is oriented and domination occurs in the direction of the arcs. More results on directed graphs may be found in \cite{britthesis, tournamentpaper}. We follow \cite{west}, with the exception of our use of $\deg(v)$ for the degree of a vertex $v$; basic definitions and notation can be found there.

The watchman's walk problem is a variation of the domination problem. Domination in graphs was formally introduced in 1958 by Berge in \cite{berge}. The domination problem in graphs aims to find a dominating set; a set of vertices in a graph such that every vertex of the graph is either in that set or a neighbour of a vertex in that set. Directed domination in tournaments was first considered by Erd{\"o}s in \cite{erdos}. A survey of results on the domination number of graphs can be found in \cite{domfundamentals}.

\section{Watchman's walks in de Bruijn graphs}
 In Figure \ref{debruijngraph3} we can see that $G(3)$ has a watchman's walk, namely the cycle given by $100,001,011,110,100$. In the following theorem, we show that this is also true of $G(k)$ for any value of $k$ on any alphabet. In Theorem \ref{debruijnwwlength}, we give an expression for the watchman number of $G(k)$ for any $k$ on any alphabet.

\begin{theorem} \label{debruijnwwlength}
If $G$ is the de Bruijn graph of a de Bruijn sequence of order $k$ on an alphabet of size $a$, then $w(G)=a^{k-1}$.
\end{theorem}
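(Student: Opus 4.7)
The plan is to establish matching bounds $a^{k-1} \leq w(G) \leq a^{k-1}$. For the upper bound, I would explicitly construct a closed dominating walk of length $a^{k-1}$. Start with a de Bruijn sequence $T$ of order $k-1$ on the alphabet of size $a$, viewed cyclically with length $a^{k-1}$. The $a^{k-1}$ cyclically consecutive $k$-substrings of $T$ are pairwise distinct, since their $(k-1)$-prefixes exhaust all $(k-1)$-strings. Consecutive $k$-substrings differ by a left shift, so they correspond to arcs of $G$ and together trace out a closed walk of length $a^{k-1}$. To see that this walk dominates $G$, let $u = c_1 c_2 \cdots c_k$ be any vertex. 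The $(k-1)$-string $c_1 \cdots c_{k-1}$ occurs exactly once as a cyclic substring of $T$, hence exactly once as the $(k-1)$-suffix of some $k$-substring $v$ appearing on the walk. Then $u$ is obtained from $v$ by a left shift with final symbol $c_k$, so $u$ is an out-neighbor of $v$ and is dominated.

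For the lower bound, I would use a counting argument exploiting the fact that every vertex of $G$ has out-degree exactly $a$. Let $W$ be any closed dominating walk of length $\ell$, and let $S$ be the set of distinct vertices visited, so that $\ell \geq |S|$. Since $W$ is closed, whenever $W$ visits a vertex $v \in S$ its successor also lies in $S$; hence at least one of the $a$ out-neighbors of each $v \in S$ lies in $S$, leaving at most $a-1$ out-neighbors of $v$ outside $S$. Every vertex of $V(G) \setminus S$ must be dominated by some vertex of $S$, so
\[|V(G) \setminus S| \leq (a-1)|S|,\]
yielding $a^k = |V(G)| \leq a|S|$, and therefore $|S| \geq a^{k-1}$ and $\ell \geq a^{k-1}$.

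The step I expect to require the most care is the domination argument in the upper bound: one must note that the $(k-1)$-suffixes of the $k$-substrings of $T$, not merely the $(k-1)$-prefixes, exhaust all $(k-1)$-strings, which follows from the cyclic nature of $T$ but deserves explicit attention. By contrast, the lower bound handles vertices possessing self-loops (the constant strings) without any modification, since a self-loop is simply one of the $a$ out-arcs of such a vertex and already contributes to the count of its out-neighbors lying in $S$.
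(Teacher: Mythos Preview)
Your proof is correct and follows essentially the same approach as the paper: the upper bound via the $k$-tour of a de Bruijn sequence of order $k-1$, and the lower bound via the out-degree-$a$ counting argument. Your treatment of the lower bound is in fact slightly more careful than the paper's, since you distinguish the walk length $\ell$ from the number $|S|$ of distinct vertices visited, whereas the paper phrases the argument only for cycles.
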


\begin{proof}
Consider $G(a, k-1)$. This graph has $a^{k-1}$ vertices. The $(k-1)$-tour of the corresponding de Bruijn sequence $S$ corresponds to a Hamilton cycle of the digraph, as each substring is included exactly once in the tour, and there is an arc from the vertex corresponding to any substring to the vertex corresponding to the following substring in the tour.
We will show that the Hamilton cycle $H$ generated by $S$ corresponds to a watchman's walk of $G(a,k).$

Consider $G=G(a, k)$. If we again consider the $(k-1)$-tour of the sequence $S$ of order $k-1$, we can generate a set $D$ of $a^{k-1}$ strings of length $k$. For each string $v \in D$, we obtain a $(k-1)$-string $v'$ by removing the first bit of a string $v \in D$. As $D$ was a de Bruijn sequence of order $k-1$, these $(k-1)$-strings are all distinct. By appending an element of the alphabet $A$ to the end of $v'$ for each $v\in D$, we obtain the $a$ neighbours of $v$ in $G$. Since the $(k-1)$ strings were distinct, the neighbourhoods in $G$ of any two elements of $D$ are disjoint. Thus, we get that the union of the neighbourhoods of the elements of $D$ is all of the $a^k$ vertices in $G$. Thus, any vertex in $G$ is either in $D$ or is dominated by an element of $D$. This means that $D$ is a dominating set in $G$.

If we again consider the $k$-tour of $S$, each $k$-substring in this tour can be obtained by a cycle shift or de Bruijn shift from the previous string. As the $k$-tour is considered cyclically, this tour corresponds to a closed dominating walk in $G$. Since $D$ is also a dominating set of $G$, $D$ induces a closed dominating walk in $G$.

Finally, $D$ must induce a minimum dominating cycle. Let $v$ be a vertex in $G$. Consider forming a dominating cycle $W$ that starts at $v$. Each vertex has at most $a$ out-neighbours, and one of these out-neighbours must also be in $W$. So, each vertex dominates at most $a-1$ vertices outside of $W$. So, if $W$ is dominating, $W$ must have length at least $\frac{a^k}{a}=a^{k-1}$. Now, $D$ induces a dominating cycle of length $a^{k-1}$, so $D$ induces a minimum dominating cycle. Since the length of $D$ is $a^{k-1}$ and $D$ is a watchman's walk for $G$, $w(G)=a^{k-1}$.
\end{proof}

\begin{example*}
	The de Bruijn sequence of order 2, 1001, has $3$-tour $\{100,001,$ $011,110\}$. This induces a watchman's walk in $G(3)$. See Figures \ref{debruijngraph2} and \ref{debruijngraph3}.
	
\end{example*}

\section{Watchman's walks in subdigraphs of de Bruijn graphs}

If $S$ is a de Bruijn sequence of order $k$ on an alphabet $A$, we call a sequence $D$ on the same alphabet $A$ a \textit{generating sequence} if $D$ has length at least $k$. We define the \textit{de Bruijn subdigraph generated by $D$} to be the digraph induced by the set of $k$-sequences in $D$ as well as the $k$-sequences obtainable from the $k$-sequences in $D$ via a left shift operation. We denote this by $G_D(|A|,k)$. If $D$ contains every possible $k$-string, then $D$ generates $G(|A|,k)$, otherwise the subdigraph generated by $D$ may be a proper subdigraph of the de Bruijn graph corresponding to $S$. In the latter case, we would like to know when the $k$-tour of $D$ induces a watchman's walk in $G_D(|A|,k)$. First we give two families of sequences that never induce a watchman's walk in the subdigraphs of the de Bruijn graph.

\begin{proposition}

If $D$ is a generating sequence of order $k$ on any alphabet $A$ of size at least $2$, then the walk induced by $D$ is never a watchman's walk in $G_D(|A|,k)$ if either of the following conditions hold:
\begin{enumerate}
	\item there are $k$ consecutive occurrences of the same bit in $D$; or
	\item $D$ is the concatenation of two identical sequences of length at least $k$.
\end{enumerate}

\end{proposition}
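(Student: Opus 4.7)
The plan is, in each case, to exhibit a closed dominating walk in $G_D(|A|,k)$ of length strictly less than $|D|$, thereby showing that the walk induced by $D$ (which has length $|D|$) is not a minimum closed dominating walk and so cannot be a watchman's walk. First I would observe that the walk induced by any generating sequence is automatically closed (consecutive $k$-substrings differ by a left shift, hence are joined by an arc in $G_D(|A|,k)$) and automatically dominating, since by definition every vertex of $G_D(|A|,k)$ is either a $k$-substring of $D$ or a left shift of one.

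For condition 1, let $c^k$ denote the $k$-substring consisting of $k$ consecutive copies of some $c\in A$ that occurs in $D$. The vertex $c^k$ carries a self-loop, and its out-neighbourhood in $G(|A|,k)$ is exactly $N=\{c^{k-1}c':c'\in A\}$. I would let $xc^{k-1}$ and $c^{k-1}y$ be the $k$-substrings of $D$ immediately before and after this occurrence of $c^k$ (cyclically in $D$). The key observation is that $xc^{k-1}$ also has out-neighbourhood $N$, so it dominates $c^k$ together with every out-neighbour of $c^k$, and in particular there is an arc directly from $xc^{k-1}$ to $c^{k-1}y$. Bypassing $c^k$ in the walk then gives a closed dominating walk of length $|D|-1$. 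In the degenerate case where $x=c$ or $y=c$, the run of $c$'s has length at least $k+1$, so two consecutive vertices of the walk both equal $c^k$ and one of them may simply be deleted via the self-loop, yielding the same conclusion.

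For condition 2, I would write $D=D'D'$ with $|D'|\geq k$. Cyclically, $D$ has period dividing $|D'|$, so the $k$-tour of $D$ is precisely the $k$-tour of $D'$ traced out twice; in particular the set of $k$-substrings of $D$ coincides with the set of $k$-substrings of $D'$, which gives $G_D(|A|,k)=G_{D'}(|A|,k)$. Thus the walk induced by $D'$ alone is a closed dominating walk in $G_D(|A|,k)$ of length $|D'|$, which is strictly less than $|D|$.

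The hard part will be the bookkeeping in condition 1: I must verify that the bypass arc from $xc^{k-1}$ to $c^{k-1}y$ genuinely lies in $G_D(|A|,k)$ (it does, since both endpoints are $k$-substrings of $D$), and that removing the single visit to $c^k$ leaves no vertex of $G_D(|A|,k)$ undominated. The latter rests precisely on the fact that $xc^{k-1}$ and $c^k$ have the same out-neighbourhood $N$ in $G(|A|,k)$, so every vertex that $c^k$ was responsible for dominating is still covered by $xc^{k-1}$ after the bypass.
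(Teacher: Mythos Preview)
Your proof is correct and follows essentially the same approach as the paper: for condition~1 you observe that the predecessor $xc^{k-1}$ of the constant vertex $c^k$ has the same out-neighbourhood, so $c^k$ can be bypassed to obtain a strictly shorter closed dominating walk (the paper phrases this as $N[v_2]\subseteq N(v_1)$), and for condition~2 you note that the $k$-tour of $D=D'D'$ is the $k$-tour of $D'$ traversed twice, so $D'$ alone already induces a closed dominating walk of half the length. Your treatment is in fact more explicit than the paper's---you verify that the bypass arc lies in $G_D(|A|,k)$ and handle the degenerate case where the run of $c$'s has length exceeding $k$---but the underlying argument is the same.
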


\begin{proof}
Let $D=a_1,a_2,\ldots, a_n$ and let the subdigraph generated by $D$ be $G'$.

For 1,  we take $a_{i+1}=a_{i+2}=\ldots=a_{i+k}$ and let $v_1=a_i,a_{i+1},\ldots, a_{i+k-1}$ and $v_2=a_{i+1},\ldots, a_{i+k}$. Since $a_{i+1}=a_{i+2}=\ldots=a_{i+k}$, we know that $(v_1,v_2)$ is an arc in $G'$, and $N\lbrack v_2\rbrack \subseteq N(v_1)$. Hence, we would not need to visit $v_2$ in a watchman's walk for $G'$. Thus, $D$ does not induce a watchman's walk for the subdigraph generated by $D$.

For 2,  we would only need to traverse at most half of the $k$-tour of the generating sequence to get a watchman's walk. Thus, the entire $k$-tour would be superfluous.
\end{proof}

In some cases, a sequence will always induce a watchman's walk. A family of such sequences is given in the following theorem.

\begin{theorem} \label{thm:generating}
If $D$ is a generating sequence of order $k$ on any alphabet $A$ of size at least $2$, such that there are no repeated $(k-1)$-tuples in $D$, then the walk induced by $D$ is a watchman's walk in $G_D(|A|,k)$.
\end{theorem}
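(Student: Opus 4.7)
The plan is to adapt the proof of Theorem~\ref{debruijnwwlength} to the subdigraph setting. Let $a = |A|$ and write $D = a_1 a_2 \cdots a_n$ cyclically, so that the $k$-tour consists of $n$ substrings and the induced walk has length $n$. The first step is to observe that the hypothesis of no repeated $(k-1)$-tuples forces all $k$-tuples in the $k$-tour to be distinct as well: two equal $k$-substrings would share a common $(k-1)$-prefix, contradicting the hypothesis. Hence the walk induced by $D$ is a directed cycle through $n$ distinct vertices of $G_D(a,k)$.

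Next, I would verify both that this cycle is a closed dominating walk and that $|V(G_D(a,k))| = na$. The out-neighbors of the cycle vertex $v_i = a_i a_{i+1} \cdots a_{i+k-1}$ are the $a$ strings of the form $a_{i+1} \cdots a_{i+k-1} b$ with $b \in A$, all sharing the prefix $(k-1)$-string $a_{i+1} \cdots a_{i+k-1}$. Because no two $(k-1)$-tuples of $D$ coincide, these prefixes are pairwise distinct as $i$ varies, so the out-neighborhoods of distinct cycle vertices are pairwise disjoint, contributing exactly $na$ distinct $k$-tuples in total. The cycle vertices themselves lie in this union (each $v_i$ is the successor of $v_{i-1}$), and by definition $V(G_D(a,k))$ is precisely this union. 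Therefore $|V(G_D(a,k))| = na$ and every vertex is either on the cycle or dominated by a cycle vertex, so the cycle is a closed dominating walk.

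Finally, I would invoke the lower bound argument from the proof of Theorem~\ref{debruijnwwlength}. For any closed dominating walk $W$ in $G_D(a,k)$, each distinct vertex of $W$ has at most $a$ out-neighbors in the parent de Bruijn graph, and at least one of them (the successor in $W$) lies in $W$; thus each such vertex accounts for at most $a-1$ vertices outside $W$. This gives $|V(G_D(a,k))| \leq a|W|$, whence $|W| \geq n$. Since the walk induced by $D$ has length exactly $n$, it is a minimum closed dominating walk, i.e., a watchman's walk. I do not foresee a significant obstacle: the only new ingredient beyond the Theorem~\ref{debruijnwwlength} argument is the disjointness of out-neighborhoods, and that is precisely where the no-repeated-$(k-1)$-tuples hypothesis enters.
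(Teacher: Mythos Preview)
Your proposal is correct and follows essentially the same route as the paper's proof: use the no-repeated-$(k-1)$-tuples hypothesis to conclude that the out-neighbourhoods of the cycle vertices are pairwise disjoint, deduce that $|V(G_D(a,k))| = an$, and then apply the same counting lower bound as in Theorem~\ref{debruijnwwlength} to see that no closed dominating walk can be shorter than $n$. Your write-up is in fact a bit more careful than the paper's (you explicitly check that the $k$-tuples on the tour are distinct, and you use $a=|A|$ consistently where the paper's proof writes $k$ in a couple of places that should read $a$).
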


\begin{proof}

In the subdigraph generated by $D$, the neighbourhood of each vertex is determined by the final $(k-1)$ bits in the $k$-substring that labels the vertex. If every $(k-1)$-tuple in $D$ is unique then each vertex in the subdigraph generated by $D$ will have only one out-neighbour in that subdigraph, and their out-neighbourhoods in $G_D(|A|,k)$ will be unique. Thus, if $D$ is a sequence of length $n$, then $G_D(|A|,k)$ will have $k \times n$ vertices. The vertices of the subdigraph generated by $D$ induce a closed walk in $G_D(|A|,k)$. This walk is clearly dominating, and has length $n$. In $G_D(|A|,k)$, each vertex has at most $k$ out-neighbours, so any closed dominating walk must have length at least $\frac{k \cdot n}{k}=n.$ Hence, the closed dominating walk induced by the vertices of the subdigraph generated by $D$ is a watchman's walk in $G_D(|A|,k)$.
\end{proof}

Theorem~\ref{thm:generating} is not an ``if and only if'', since sequences with repeated $(k-1)$-tuples may still lead to watchman's walks in the generated subdigraph. One such example is given in Figure~\ref{db3}. In this figure, the arcs between vertices of the generating sequence are in black, while the arcs involving outneighbours are in grey. There are exactly two watchman's walks in this graph of length 8, one of which is defined by the generating sequence.

\begin{figure}[H]
		\centering
		\includegraphics[scale=0.9]{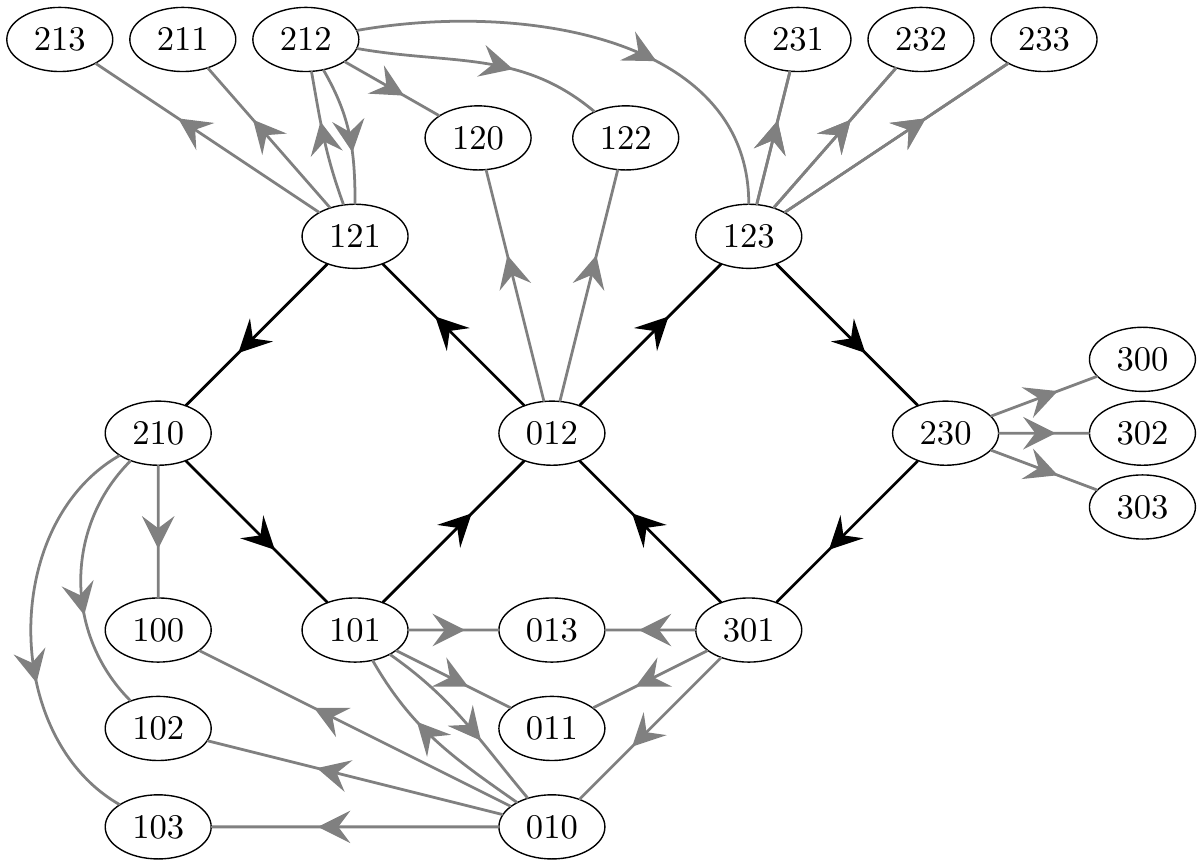}
		\caption{The subgraph of the de Bruijn graph $G(4,3)$ on $\{0,1,2,3\}$ generated by 01210123.}
		\label{db3}
\end{figure}

The open question remains.

\begin{question} What conditions must be placed on a generating sequence $D$, so that the walk induced by $D$ is a watchman's walk in the de Bruijn subdigraph generated by $D$?
\end{question}

\bibliographystyle{plain}
\bibliography{db}
\end{document}